\documentclass[reqno]{amsart}
\usepackage{amsmath, bm, amsthm, amssymb}
\usepackage{setspace}
\usepackage{graphicx}
\usepackage{graphics, epsfig, subfigure, epstopdf}
\usepackage{curves}

\theoremstyle{plain}
\newtheorem{thm}{Theorem}[section]
\newtheorem{lemma}{Lemma}[section]

\theoremstyle{remark}
\newtheorem{rema}{Remark}[section]
\newtheorem{example}{Example}[section]

\numberwithin{equation}{section}

\doublespace

\begin{document}
\title[Inverse Scattering Problem with Multi-frequency Data]{Inverse Scattering Approach on Tomography Problem Using Multi-frequency Data}

\author[Ying Li]{Ying Li\\
Department of Mathematics\\
Saint Francis University\\
Loretto, PA 15940\\
814-4723854\\
YLi@francis.edu}

\keywords{inverse scattering; multi-frequency data; recursive linearization; finite element; variational method}
\subjclass[2000]{78A46, 78M10, 78M25, 65N21}

\begin{abstract}

An inverse scattering problem is formulated for reconstructing optical properties of biological tissues. A recursive linearization algorithm is used
to solve the inverse scattering problem. We employed the idea of
finite element boundary integral method and added suitable boundary
conditions on the surface of the domain. The initial guess is obtained by Born approximation based on the fact of weak scattering. The reconstruction 
is then improved each time by an increment on wave number. Finite element method is used for the
interior domain containing inhomogeneity. Nystr$\ddot{o}$m method is
used for setting up the boundary conditions and jump conditions. Two numerical
examples are presented.

\end{abstract}

\maketitle

\section{Introduction}
Photo-acoustic tomography has been shown great interest over the past decades and used to reconstruct optical property of biological tissue, such as the breast and the brain. It is a phenomenon in which the optical properties of the underlying medium is modified by absorbed radiation which in turn generates measurable acoustic waves. The acoustic signal is then collected to recover the property of the medium. Readers are referred to~\cite{B-W} for a description of the photo-acoustic effect and~\cite{H-M-P-D, K-L-F-A, W-P-K-X-S-W, X-W, Z-L-B} for the development of the hybrid imaging modality, which combines the optical methods with the spatial resolution of ultrasound imaging.

The difficulty of acousto-optic imaging lies in various aspects of the reconstruction of absorbed radiation from the acoustic signal, such as limited data, spatially varying acoustic sound speed and the effects of acoustic wave attenuation. In~\cite{B-R-U-Z}, the authors assume the absorbed radiation known, then reconstructed the conductivity coefficient from the known absorbed radiation.

In this paper, we formulate the problem as an inverse scattering
problem, which is to determine the conductivity property of the tissue
from the measurements of electromagnetic field on the boundary
of the medium, given the incident field.

Some related results can be found in~\cite{B-K-book} and~\cite{B-K-Journal}, where the authors proposed a globally convergent numerical method
for an inverse problem of recovering the coefficient from non overdetermined time dependent
data with the single source location. The book~\cite{B-K-book} summarized results of its authors published in various journals
 in 2008-2011. Their algorithm was tested on both
computationally simulated and experimental data.

Our approach follows the general
idea of~\cite{ChenYu} and employs the recursive linearization
algorithm from~\cite{peijun} and~\cite{B-L}. In their work the authors used fixed-frequency data, which contains multiple spatial frequency evanescent plane waves. In this paper multi-frequency data is used to recover the dielectric property of 
a dispersive medium, which depends on the wavenumber and has wider applications. A relevant convergence result can be found in ~\cite{B-T}, in which the authors proved the convergence of the algorithm along with an error estimate under some reasonable assumptions.  

In two dimensional cases, the electromagnetic intensity satisfies
the Helmholtz equation:

\begin{align}
\Delta u+k_0^2(1+q(k_0,x)) u=0,\label{helmeq}
\end{align}
where $u$ is the total field; $k_0$ is the wavenumber in vacuum; $q(k_0,x)$ is the scatterer which has a
compact support and $\epsilon(k_0, x) = 1+ q(k_0, x)$ is the dielectric permittivity
in dispersive medium. We assumed that $q(k_0,x)={\rm i} \sigma(x)/k_0$, where
$\sigma(x)$ is the conductivity of the medium.
In the following, we
assume that the material is nonmagnetic, i.e., $\mu_0=1$.

The scatterer is illuminated by a one-parameter family of plane
waves

\begin{align}
u^{\rm i}=\exp({\rm i} k_0 \bm x \cdot {\bm d_1}),\label{inceq}
\end{align}
where ${\bm {d_1}}=(\cos{\theta},\sin{\theta})$, $\theta \in [0,2\pi]$.
Evidently, such incident waves satisfy the homogeneous equation
\begin{align}
\Delta u^{\rm i}+k^2_0 u^{\rm i}=0.
\end{align}
The total electric field $u$ consists of the incident field $u^{\rm i}$
and the scattered field $u^{\rm s}$:
\begin{align}
u=u^{\rm i}+u^{\rm s}.
\end{align}
It follows from the equations \eqref{helmeq} and \eqref{inceq} that
the scattered field satisfies
\begin{align}
\Delta u^{\rm s}+k_0^2 (1+q)u^{\rm s}=-k_0^2 qu^{\rm i}.\label{scateq}
\end{align}
In free space, the scattered field is required to satisfy the
following Sommerfeld radiation condition
\begin{align}
\lim_{r\to\infty} \sqrt{r}\left(\frac{\partial u^{\rm s}}{\partial r}-{\rm i} k_0
u^{\rm s}\right)=0,\quad r=|x|,
\end{align}
uniformly along all directions $x/|x|$. In
practice, it is convenient to reduce the problem to a bounded
domain. For the sake of simplicity, we employ the first order
absorbing boundary condition~\cite{Ji} on the surface of the medium:
\begin{align}
\frac{\partial u^{\rm s}}{\partial n}-{\rm i} k_0 u^{\rm s}=0.\label{abc}
\end{align}

Given the incident field $u^{\rm i}$, the direct problem is to determine
the scattered field $u^{\rm s}$ for the known scatterer $q(k_0,x)$. Using
the Lax-Milgram lemma and the Fredholm alternative, the direct
problem is shown in~\cite{peijun} to have a unique solution for all
$k_0>0$. An energy estimate for the scattered field is given in this
paper, which provides a criterion for the weak scattering.
Furthermore, properties on the continuity and the Fr\'{e}chet
differentiability of the nonlinear scattering map are examined. For
the regularity analysis of the scattering map in an open domain, the
reader is referred to~\cite{B-C-M},~\cite{C-K} and~\cite{K-W}. The
inverse medium scattering problem is to determine the scatterer
$q(k_0,x)$ from the measurements on the surface of the medium,
$u^{\rm s}|_{\Gamma}$, given the incident field $u^{\rm i}$. Two
major difficulties for solving the inverse problem by optimization
methods are the ill-posedness and the presence of many local minima.
In this paper we developed a continuation method based on the
approach introduced in~\cite{peijun}. The algorithm requires
multi-frequency scattering data. Using an initial guess from the
Born approximation, each update is obtained via recursive
linearization on the wavenumber $k_0$ by solving one forward problem
and one adjoint problem of the Helmholtz equations.

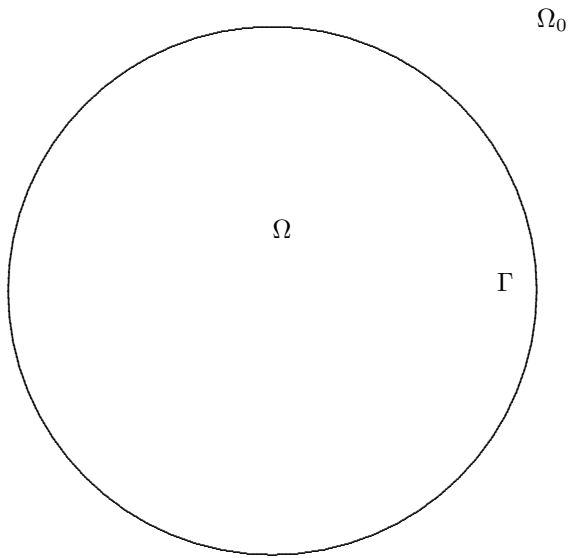
\begin{figure}
  \centering
\begin{picture}(200,190)(0,0)
  \put(90,90){\bigcircle{200}}
  \put(90,110){$\Omega$}
  \put(190,190){$\Omega_0$}
  \put(175,90){$\Gamma$}
\end{picture}
\caption{Geometry of the inverse scattering
problem}\label{inverse_geometry_simple}
\end{figure}

The plan of this paper is as follows. The analysis of the
variational problem for direct scattering is presented in section \ref{variational}. The Fr\'{e}chet differentiability of the scattering map is also
given. In section \ref{bornSec}, an initial guess of the reconstruction from
the Born approximation is derived in the case of weak scattering.
Section \ref{recursive} is devoted to numerical study of a regularized iterative
linearization algorithm. In section \ref{implementation}, we discuss the numerical implementation of the forward scattering problem and the recursive linearization algorithm. Numerical examples are presented in section \ref{example}.

\section{Analysis of the Scattering Map}\label{variational}
In this section, the direct scattering problem is studied to provide
some criterion for the weak scattering, which plays an important
role in the inversion method. The Fr\'{e}chet differentiability of
the scattering map for the problem \eqref{scateq}, \eqref{abc} is
examined.

To obtain the variational form of our boundary value problem, we multiply \eqref{scateq} by a test function $\psi$, and integrate:
\begin{align*}
\int_\Omega (\Delta u^{\rm s}\psi+k_0(1+q)u^{\rm s}\psi) dx=\int_\Omega -k_0^2 qu^{\rm i}\psi dx.
\end{align*}
Using Green's First Identity, we get
\begin{align*}
-\int_\Omega \nabla u^{\rm s}\nabla\psi dx +\int_\Gamma  \frac{\partial u^{\rm s}}{\partial n} \psi ds +
\int_\Omega k_0(1+q)u^{\rm s}\psi dx=\int_\Omega -k_0^2 qu^{\rm i}\psi dx.
\end{align*}
By \eqref{abc}, it is equivalent to
\begin{align*}
-\int_\Omega \nabla u^{\rm s}\nabla\psi dx +\int_\Gamma {\rm i}k_0 u^{\rm s} \psi ds +
\int_\Omega k_0(1+q)u^{\rm s}\psi dx=\int_\Omega -k_0^2 qu^{\rm i}\psi dx.
\end{align*}
Therefore, we introduce the bilinear form
$a: H^1(\Omega)\times H^1(\Omega)\to\mathbb{C}$
\begin{align}
a(\phi,\psi)=(\nabla \phi, \nabla \psi)-k_0^2((1+q) \phi,
\psi)-{\rm i} k_0\langle \phi, \psi\rangle,
\end{align}
and the linear functional on $H^1(\Omega)$
\begin{align}
b(\psi)=(k_0^2 q u^{\rm i}, \psi).
\end{align}
Here, we have used the standard inner products
\begin{align}
(\phi, \psi)=\int_{\Omega}
\phi\cdot\overline{\psi} {\rm d}x\,\quad \textrm{and} \quad \langle \phi,
\psi\rangle=\int_{\Gamma}
\phi\cdot\overline{\psi}{\rm d}s,
\end{align}
where the overline denotes the complex conjugate.

Then, we have the weak form of the boundary value
problem \eqref{scateq} and \eqref{abc}: find $u^{\rm s} \in H^1(\Omega)$
such that
\begin{align}
a(u^{\rm s}, \xi)=b(\xi),\quad\forall \xi\in H^1(\Omega).\label{varprob}
\end{align}

Throughout the paper, the constant $C$ stands for a positive generic
constant whose value may change step by step, but should always be
clear from the contexts.

For a given scatterer $q$ and an incident field $u^{\rm i}$, suppose a solution of the problem \eqref{scateq} and \eqref{abc} or the variational problem \eqref{varprob} $u^{\rm s} \in H^1(\Omega)$ exists, we define the
map $S(q,u^{\rm i})$ by $u^{\rm s}=S(q,u^{\rm i})$. It is easily seen that the map $S(q,u^{\rm i})$
is linear with respect to $u^{\rm i}$ but is nonlinear with respect to
$q$. Hence, we may denote $S(q,u^{\rm i})$ by $S(q)u^{\rm i}$. In the following, we will show the existence of the solution by Fredholm alternative.

Concerning the map $S(q)$, a continuity result for the map $S(q)$ is
presented in Lemma~\ref{contSq}.
\begin{lemma}
Given the scatterer $q\in L^{\infty}(\Omega)$, the direct
scattering problem \eqref{scateq} and \eqref{abc} has at most one
solution.
\end{lemma}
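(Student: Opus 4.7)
\medskip

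\noindent\textbf{Proof plan.} The plan is to reduce uniqueness to the homogeneous problem via the variational formulation \eqref{varprob}, extract an $L^2$ boundary trace vanishing condition from the imaginary part of a self-testing identity, and finish by unique continuation.

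First I would suppose that $u_1^{\rm s}, u_2^{\rm s}\in H^1(\Omega)$ are two solutions and set $w=u_1^{\rm s}-u_2^{\rm s}$. By linearity, $w$ satisfies the homogeneous variational problem
\begin{equation*}
a(w,\xi)=0\qquad\forall\,\xi\in H^1(\Omega).
\end{equation*}
Choosing the admissible test function $\xi=w$ gives the single scalar identity
\begin{equation*}
\|\nabla w\|_{L^2(\Omega)}^{2}-k_0^2\int_\Omega(1+q)|w|^2\,{\rm d}x-{\rm i}k_0\|w\|_{L^2(\Gamma)}^{2}=0.
\end{equation*}

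Next I would separate real and imaginary parts. Using the standing assumption $q={\rm i}\sigma(x)/k_0$ with $\sigma\ge 0$, the imaginary part of the above identity reads
\begin{equation*}
-k_0\int_\Omega \sigma(x)|w|^2\,{\rm d}x-k_0\|w\|_{L^2(\Gamma)}^{2}=0.
\end{equation*}
Since $k_0>0$ and both terms on the left are non-positive, each must vanish individually. In particular, $w=0$ on $\Gamma$. Substituting this back into the absorbing boundary condition \eqref{abc} yields $\partial w/\partial n=0$ on $\Gamma$ as well, so $w$ has vanishing Cauchy data on the boundary.

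Finally, $w\in H^1(\Omega)$ solves the elliptic equation $\Delta w+k_0^2(1+q)w=0$ in $\Omega$ with $L^\infty$ potential and trivial Cauchy data on $\Gamma$; the classical unique continuation principle for second-order elliptic equations with bounded coefficients then forces $w\equiv 0$ in $\Omega$, giving $u_1^{\rm s}=u_2^{\rm s}$.

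I expect the main obstacle to be the last step: while the variational testing and the sign bookkeeping are routine, the conclusion that zero Cauchy data propagate into the interior relies on a nontrivial unique continuation result (for instance via an Aronszajn-type estimate or Carleman inequalities), which must be cited rather than reproved. The whole argument, however, hinges critically on the sign of $\operatorname{Im}q$; if $\sigma$ were not assumed non-negative the sign-chasing step would fail and a different route (e.g.\ Fredholm theory combined with a Rellich-type argument after extending $w$ to the exterior domain) would be required.
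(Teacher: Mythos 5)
Your proof is correct and follows essentially the same route as the paper: both extract the imaginary part of a quadratic (energy) identity for the difference of two solutions, use the sign condition $\Im q=\sigma/k_0\ge 0$ to force the boundary trace and hence the Cauchy data on $\Gamma$ to vanish, and conclude by unique continuation. The only cosmetic difference is that you work through the variational form $a(w,w)=0$ while the paper applies Green's second identity to $u^{\rm s}$ and $\bar u^{\rm s}$; these are the same computation.
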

\begin{proof}
It is sufficient to show that $u^{\rm s}=0$ if $u^{\rm i}=0$. We begin by noting from \eqref{scateq} that
\begin{align*}
&\Im\{\int_{\Omega}(u^{\rm s}\Delta \bar{u^{\rm s}} - \bar{u^{\rm s}} \Delta u^{\rm s}  ) dV\}\\
&\quad=\Im\{\int_{\Omega}[u^{\rm s}(-k_0^2 (1+\bar{q}) \bar{u^{\rm s}}) - \bar{u^{\rm s}} (-k_0^2 (1+q) u^{\rm s}) ] dV\}\\
&\quad=\Im\{\int_{\Omega}|u^{\rm s}|^2 (-k_0^2 (1+\bar{q})+k_0^2(1+q)) dV\}\\
&\quad=\int_{\Omega} |u^{\rm s}|^2 k_0^2 (1+2 \Im(q)) dV>=0.
\end{align*}
On the other hand by the absorbing boundary condition \eqref{abc}
\begin{align*}
&\Im\{\int_{\Omega}(u^{\rm s}\Delta \bar{u^{\rm s}} - \bar{u^{\rm s}} \Delta u^{\rm s}  ) dV\}\\
&\quad=\Im\{\oint_{\Gamma}(u^{\rm s} \frac{\partial \bar{u^{\rm s}}}{\partial n} - \bar{u^{\rm s}}\frac{\partial u^{\rm s}}{\partial n}  ) dS\}\\
&\quad=\Im\{\oint_{\Gamma}(- {\rm i}k_0 u^{\rm s} \bar{u^{\rm s}} - {\rm i} k_0 \bar{u^{\rm s}} u^{\rm s}  ) dS\}\\
&\quad=-2k_0 \oint_{\Gamma}\|u^{\rm s}\|^2 dS.
\end{align*}
Hence we have $u^{\rm s}=0$ on $\Gamma$ and $\frac{\partial u^{\rm s}}{\partial n}=0$ on $\Gamma$, which implies $u^{\rm s}=0$ in $\Omega$ by a unique continuation result for Helmholtz equation~\cite{C-K}. 
\end{proof}

\begin{lemma}\label{bdnSq}
If the wavenumber $k_0$ is sufficiently small, the variational
problem \eqref{varprob} admits a unique weak solution in
$H^1(\Omega)$ and $S(q)$ is a bounded linear map from
$L^2(\Omega)$ to $H^1(\Omega)$. Furthermore, there is a constant
$C$ dependent of $\Omega$, such that
\begin{align}
\| S(q)u^{\rm i}\|_{H^1(\Omega)}\leq C
k_0^2 \|q\|_{
L^{\infty}(\Omega)}\|u^{\rm i}\|_{
L^2(\Omega)}.\label{ineqSq}
\end{align}
\end{lemma}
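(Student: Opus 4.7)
The plan is to verify the Lax--Milgram hypotheses for the sesquilinear form $a$ on $H^1(\Omega)\times H^1(\Omega)$ and then extract the energy estimate \eqref{ineqSq} by testing \eqref{varprob} against $u^{\rm s}$ itself. Continuity of $a$ is routine: the gradient term is Cauchy--Schwarz, the potential term $k_0^2((1+q)\phi,\psi)$ is controlled using $q\in L^{\infty}(\Omega)$, and the impedance boundary term is handled by the standard trace inequality $\|\cdot\|_{L^2(\Gamma)}\le C_{\rm tr}\|\cdot\|_{H^1(\Omega)}$. Similarly $|b(\xi)|\le k_0^2\|q\|_{L^{\infty}}\|u^{\rm i}\|_{L^2}\|\xi\|_{L^2}$, so $b$ is a bounded antilinear functional.

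The crux is coercivity. Setting $\psi=\phi$ and splitting into real and imaginary parts,
\begin{align*}
\Re a(\phi,\phi) &= \|\nabla\phi\|_{L^2}^2 - k_0^2\|\phi\|_{L^2}^2 - k_0^2\int_\Omega \Re(q)|\phi|^2,\\
\Im a(\phi,\phi) &= -k_0^2\int_\Omega \Im(q)|\phi|^2 - k_0\|\phi\|_{L^2(\Gamma)}^2.
\end{align*}
The sign-indefinite $-k_0^2\|\phi\|_{L^2}^2$ in $\Re a$ is the obstruction. To absorb it, I would invoke the Poincar\'e--trace inequality $\|\phi\|_{L^2(\Omega)}^2\le C_\Omega\bigl(\|\nabla\phi\|_{L^2(\Omega)}^2+\|\phi\|_{L^2(\Gamma)}^2\bigr)$, valid on bounded Lipschitz domains, and combine $\Re a(\phi,\phi)$ with $-\Im a(\phi,\phi)/k_0$ so that the $\|\phi\|_{L^2(\Gamma)}^2$ contribution from the impedance term helps control the interior $L^2$ norm. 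For $k_0$ small enough, with smallness depending only on $\|q\|_{L^{\infty}}$ and $C_\Omega$, this produces $|a(\phi,\phi)|\ge c\|\phi\|_{H^1}^2$ with $c$ independent of $k_0$.

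Once Lax--Milgram delivers the unique $u^{\rm s}\in H^1(\Omega)$, the energy estimate follows by taking $\xi=u^{\rm s}$ in \eqref{varprob}:
\[
c\|u^{\rm s}\|_{H^1}^2\le|a(u^{\rm s},u^{\rm s})|=|b(u^{\rm s})|\le k_0^2\|q\|_{L^{\infty}}\|u^{\rm i}\|_{L^2}\|u^{\rm s}\|_{H^1},
\]
and cancellation of one factor of $\|u^{\rm s}\|_{H^1}$ yields exactly \eqref{ineqSq}. The main obstacle I anticipate is the coercivity step: one must balance the real and imaginary parts of $a(\phi,\phi)$ against a trace-type Poincar\'e inequality while tracking the $k_0$-dependence cleanly, so that the coercivity constant $c$ can be taken independent of $k_0$ in the small-wavenumber regime. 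This uniformity is exactly what allows the factor $k_0^2$ in \eqref{ineqSq} to be isolated and, conceptually, what makes the weak-scattering regime accessible to the Born approximation used later.
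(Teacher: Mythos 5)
Your strategy---direct Lax--Milgram on the full sesquilinear form $a$---is genuinely different from the paper's. The paper never claims coercivity of $a$ itself: it splits $a=a_1+k_0^2a_2$ with $a_1(\phi,\xi)=(\nabla\phi,\nabla\xi)-{\rm i}k_0\langle\phi,\xi\rangle$, proves coercivity only for $a_1$, shows that the operator $\mathcal{A}$ induced by $a_2$ is compact, rewrites \eqref{varprob} as $(\mathcal{I}+k_0^2\mathcal{A})u^{\rm s}=\phi$, and inverts by a Neumann-series argument for small $k_0$. Crucially, the factor $k_0^2$ in \eqref{ineqSq} is inherited from the auxiliary source problem $a_1(\phi,\xi)=b(\xi)$ via \eqref{ineq2}, not from testing the full form against $u^{\rm s}$.

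The genuine gap in your proposal is the coercivity claim. The assertion that $|a(\phi,\phi)|\ge c\,\|\phi\|_{H^1(\Omega)}^2$ with $c$ independent of $k_0$ is false: take $\phi\equiv 1$, for which $a(1,1)=-k_0^2\int_\Omega(1+q)\,{\rm d}x-{\rm i}k_0|\Gamma|=O(k_0)$ while $\|1\|_{H^1(\Omega)}^2=|\Omega|$ is fixed, so the coercivity constant must degenerate at least linearly as $k_0\to 0$. The mechanism is visible in your own computation: the only term controlling the constant mode is the impedance term, which carries a factor $k_0$; your combination $\Re a(\phi,\phi)-\Im a(\phi,\phi)/k_0$ does dominate $c_0\|\phi\|_{H^1}^2$ after absorbing the $O(k_0^2)$ volume terms, but converting this back into a bound on $|a(\phi,\phi)|$ costs $|\Re a-\Im a/k_0|\le(1+1/k_0)\,|a(\phi,\phi)|$, leaving only $|a(\phi,\phi)|\gtrsim k_0\|\phi\|_{H^1}^2$. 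Feeding that into your final cancellation step yields $\|u^{\rm s}\|_{H^1(\Omega)}\le C k_0\|q\|_{L^\infty(\Omega)}\|u^{\rm i}\|_{L^2(\Omega)}$---one power of $k_0$ short of \eqref{ineqSq}. (The same constant-function obstruction makes the uniformity of the paper's coercivity constant for $a_1$ delicate as well, but the decomposition at least isolates where the $k_0^2$ is supposed to come from.) To repair your argument you would need either to follow a decomposition of the paper's type, in which the source term already carries $k_0^2$ and the perturbation $(\mathcal{I}+k_0^2\mathcal{A})^{-1}$ is argued to be uniformly bounded, or to treat the constant mode and the mean-zero part of $u^{\rm s}$ separately so that the lost power of $k_0$ is recovered.
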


\begin{proof}
Decompose the bilinear form $a$ into $a=a_1+k_0^2 a_2$, where
\begin{align}
a_1(u^{\rm s}, \xi)&=(\nabla u^{\rm s}, \nabla\xi)-{\rm i} k_0\langle u^{\rm s}, \xi\rangle,\\
a_2(u^{\rm s}, \xi)&=-((1+q)u^{\rm s}, \xi).
\end{align}
We conclude that $a_1$ is coercive from
\begin{align*}
|a_1(u^{\rm s},u^{\rm s})|&=|\|\nabla u^{\rm s}\|^2_{
L^2(\Omega)}-{\rm i}
k_0\|u^{\rm s}\|^2_{L^{2}(\Gamma)}|\\
&=\sqrt{\|\nabla u^{\rm s}\|^4_{L^2(\Omega)}+k_0^2 \|u^{\rm s}\|^4_{L^{2}(\Gamma)}}\\
&\geq C \|u^{\rm s}\|^2_{H^1(\Omega)},
\end{align*}
where the last inequality may be obtained by Poincar\'e inequality. Next, we prove the compactness of
$a_2$. Define an operator $\mathcal{A}: L^2(\Omega)\to
H^1(\Omega)$ by
\begin{align}
a_1(\mathcal{A}u^{\rm s}, \xi)=a_2(u^{\rm s}, \xi),\quad\forall \xi\in
H^1(\Omega),
\end{align}
which gives
\begin{align*}
(\nabla\mathcal{A}u^{\rm s}, \nabla\xi)-{\rm i} k_0\langle\mathcal{A}u^{\rm s},
\xi\rangle =-((1+q)u^{\rm s}, \xi),\quad\forall \xi\in H^1(\Omega).
\end{align*}
Using the Lax--Milgram Lemma, it follows that
\begin{align*}
\|\mathcal{A}u^{\rm s}\|_{
H^1(\Omega)}\leq C\|u^{\rm s}\|_{
L^2(\Omega)},\label{ineq1}
\end{align*}
where the constant $C$ is independent of $k_0$. Thus $\mathcal{A}$
is bounded from $L^2(\Omega)$ to $H^1(\Omega)$ and
$H^1(\Omega)$ is compactly imbedded into $L^2(\Omega)$. Hence
$\mathcal{A}: L^2(\Omega)\to L^2(\Omega)$ is a compact operator.

Define a function $\phi \in L^2(\Omega)$ by requiring $\phi\in
H^1(\Omega)$ and satisfying
\begin{align}
a_1(\phi, \xi)=b(\xi),\quad\forall\xi\in H^1(\Omega).
\end{align}
It follows from the Lax--Milgram Lemma again that
\begin{align}
\|\phi\|_{ H^1(\Omega)}\leq C
k_0^2\|q\|_{
L^{\infty}(\Omega)}\|u^{\rm i}\|_{
L^2(\Omega)}.\label{ineq2}
\end{align}
Using the operator $\mathcal{A}$, we can see that problem \eqref{varprob} is equivalent to find $u^{\rm s}\in L^2(\Omega)$
such that
\begin{align}
(\mathcal{I}+k_0^2\mathcal{A})u^{\rm s}=\phi.\label{ineq3}
\end{align}
When the wavenumber $k_0$ is small enough, the operator
$\mathcal{I}+k_0^2\mathcal{A}$ has a uniformly bounded inverse. We
then have the estimate
\begin{align}
\|u^{\rm s}\|_{ L^2(\Omega)}\leq C\|\phi\|_{
L^2(\Omega)},\label{ineq4}
\end{align}
where the constant $C$ is independent of $k_0$.
Rearranging \eqref{ineq3}, we have $u^{\rm s}=\phi-k_0^2\mathcal{A}u^{\rm s}$, so
$u^{\rm s} \in H^1(\Omega)$ and, by the estimate \eqref{ineq1} for the
operator $\mathcal{A}$, we have
\begin{align*}
\|u^{\rm s}\|_{ H^1(\Omega)}\leq \| \phi\|_{
H^1(\Omega)}+ C k_0^2 \|u^{\rm s}\|_{
L^2(\Omega)}.
\end{align*}
The proof is complete by combining the estimates \eqref{ineq4}
and \eqref{ineq2} and observing that $u^{\rm s}=S(q)u^{\rm i}$. \qedhere
\end{proof}

For a general wavenumber $k_0>0$, from the equation \eqref{ineq3},
the existence follows from the Fredholm alternative and the
uniqueness result. 

\begin{rema} \label{rmkweak}
It follows from the explicit form of the incident
field \eqref{inceq} and the estimate \eqref{ineqSq} that
\begin{align}
\|u^{\rm s}\|_{ H^1(\Omega)}\leq C k_0^2
|\Omega|^{\frac{1}{2}}\|q\|_{ L^{\infty}(\Omega)},
\end{align}
where the
constant $C$ depends on $\Omega$.
\end{rema}

\begin{lemma}\label{contSq}
Assume that $q_1, q_2\in L^{\infty}(\Omega)$. Then
\begin{align}
\| S(q_1)u^{\rm i}-S(q_2)u^{\rm i}\|_{ H^1(\Omega)}\leq C\|
q_1-q_2\|_{ L^{\infty}(\Omega)}\|u^{\rm i}\|_{
L^2(\Omega)},\label{contSqIneq}
\end{align}
where the constant $C$ depends on $k_0, \Omega$, and $\|
q_2\|_{ L^{\infty}(\Omega)}$.
\end{lemma}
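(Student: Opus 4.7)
The plan is to derive a linear elliptic problem satisfied by the difference $w := S(q_1)u^{\rm i}-S(q_2)u^{\rm i}$, then apply the well-posedness and boundedness of the forward scattering map (Lemma~\ref{bdnSq} together with the Fredholm-alternative extension noted after it) to bound $\|w\|_{H^1(\Omega)}$ by a suitable source norm, and finally control that source via the triangle inequality and Lemma~\ref{bdnSq} again.

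Setting $u_j^{\rm s}=S(q_j)u^{\rm i}$ and $u_j=u^{\rm i}+u_j^{\rm s}$ for $j=1,2$, each $u_j^{\rm s}$ satisfies the variational identity $a_{q_j}(u_j^{\rm s},\xi)=(k_0^2 q_j u^{\rm i},\xi)$. Using the elementary decomposition $a_{q_1}(v,\xi)-a_{q_2}(v,\xi)=-k_0^2((q_1-q_2)v,\xi)$ and subtracting the two identities, I would rewrite everything in terms of $a_{q_2}$, which gives
\begin{align*}
a_{q_2}(w,\xi)=\bigl(k_0^2(q_1-q_2)\,u_1,\xi\bigr)\quad\forall\xi\in H^1(\Omega).
\end{align*}
Thus $w$ is the weak solution of a direct scattering problem with scatterer $q_2$ and an effective $L^2$ source $k_0^2(q_1-q_2)u_1$. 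Applying the well-posedness of the forward problem for scatterer $q_2$ at the fixed wavenumber $k_0$ (whose operator norm depends on $k_0$, $\Omega$, and $\|q_2\|_{L^\infty(\Omega)}$) yields
\begin{align*}
\|w\|_{H^1(\Omega)}\le C(k_0,\Omega,\|q_2\|_{L^\infty(\Omega)})\,k_0^2\,\|q_1-q_2\|_{L^\infty(\Omega)}\,\|u_1\|_{L^2(\Omega)}.
\end{align*}

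The remaining step is to bound $\|u_1\|_{L^2(\Omega)}\le \|u^{\rm i}\|_{L^2(\Omega)}+\|u_1^{\rm s}\|_{L^2(\Omega)}$. Writing $u_1^{\rm s}=u_2^{\rm s}+w$ and using Lemma~\ref{bdnSq} for $u_2^{\rm s}=S(q_2)u^{\rm i}$ together with the bound already obtained for $w$, I can absorb $w$ (for $\|q_1-q_2\|_{L^\infty}$ small, or equivalently by reorganizing the estimate as a fixed-point style inequality) and conclude $\|u_1\|_{L^2}\le C(k_0,\Omega,\|q_2\|_{L^\infty})\|u^{\rm i}\|_{L^2}$. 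Substituting back gives the desired inequality~\eqref{contSqIneq}.

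The main obstacle is the bookkeeping of the dependence of the constant. A naive argument estimating $\|u_1^{\rm s}\|_{L^2}$ directly by Lemma~\ref{bdnSq} introduces dependence on $\|q_1\|_{L^\infty(\Omega)}$; to keep the dependence only on $\|q_2\|_{L^\infty(\Omega)}$ as claimed, the estimate for $w$ has to be used self-referentially so that $\|u_1\|_{L^2}$ is controlled in terms of $\|u_2\|_{L^2}$ plus a small correction. A secondary routine point is invoking the Fredholm-alternative result at arbitrary $k_0>0$ (not only the small-$k_0$ regime of Lemma~\ref{bdnSq}) to guarantee that the solution operator associated with $a_{q_2}$ is bounded; this is where the $k_0$-dependence of $C$ enters.
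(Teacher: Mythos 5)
Your overall strategy is the same as the paper's: subtract the two variational identities to get an equation for $w=S(q_1)u^{\rm i}-S(q_2)u^{\rm i}$ whose source is proportional to $q_1-q_2$ times a total field, apply the well-posedness estimate of Lemma~\ref{bdnSq} to that equation, and then bound the source. The difference is in how you split the cross term $(1+q_1)u_1^{\rm s}-(1+q_2)u_2^{\rm s}$: you write it as $(1+q_2)w+(q_1-q_2)u_1^{\rm s}$, so that the operator on the left carries $q_2$ and the source carries the \emph{unknown-side} total field $u_1=u^{\rm i}+u_1^{\rm s}$. The paper instead writes it as $(1+q_1)w+(q_1-q_2)u_2^{\rm s}$, obtaining
\begin{align*}
\Delta w+k_0^2(1+q_1)w=-k_0^2(q_1-q_2)(u^{\rm i}+u^{\rm s}_2),
\end{align*}
so the source involves $u^{\rm i}+u_2^{\rm s}$, which is bounded immediately by Lemma~\ref{bdnSq} applied to $S(q_2)u^{\rm i}$ with a constant depending only on $\|q_2\|_{L^\infty(\Omega)}$ --- no self-referential step is needed.

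The concrete gap in your version is the absorption step. Your estimate reads $\|w\|_{H^1}\le Ck_0^2\|q_1-q_2\|_{L^\infty}\bigl(\|u^{\rm i}\|_{L^2}+\|u_2^{\rm s}\|_{L^2}+\|w\|_{L^2}\bigr)$, and moving the $\|w\|$ term to the left requires $Ck_0^2\|q_1-q_2\|_{L^\infty}<1$. The lemma is stated for arbitrary $q_1,q_2\in L^\infty(\Omega)$ with no smallness hypothesis on $q_1-q_2$, so as written your argument only proves the estimate in a neighborhood of $q_2$; the alternative you mention (bounding $\|u_1^{\rm s}\|$ directly) makes the constant depend on $\|q_1\|_{L^\infty(\Omega)}$, contradicting the stated dependence. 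The fix is simply to use the other splitting, as the paper does. (To be fair, the paper's own arrangement quietly makes the solution operator on the left depend on $q_1$ rather than $q_2$, so neither bookkeeping is fully clean; but the paper's choice at least eliminates the circularity you have to work around, and in the intended application $q_1$ and $q_2$ are close anyway.)
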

\begin{proof}
Let $u^{\rm s}_1=S(q_1)u^{\rm i}$ and $u^{\rm s}_2=S(q_2)u^{\rm i}$. It follows that for
$j=1, 2$
\begin{align*}
\Delta u^{\rm s}_j+k_0^2(1+q_j)u^{\rm s}_j=-k_0^2 q_j u^{\rm i}.
\end{align*}
By setting $w=u^{\rm s}_1-u^{\rm s}_2$, we have
\begin{align*}
\Delta w+k_0^2(1+q_1)w=-k_0^2(q_1-q_2)(u^{\rm i}+u^{\rm s}_2).
\end{align*}
The function $w$ also satisfies the boundary condition \eqref{abc}.

We repeat the procedure in the proof of Lemma~\ref{bdnSq} 

\begin{align*}
\| w\|_{ H^1(\Omega)}\leq C\|
q_1-q_2\|_{
L^{\infty}(\Omega)}\|u^{\rm i}+u^{\rm s}_2\|_{ L^2(\Omega)}.
\end{align*}
Using Lemma~\ref{bdnSq} again for $u^{\rm s}_2$ yields
\begin{align*}
\|u^{\rm s}_2\|_{ H^1(\Omega)}\leq C\| q_2\|_{
L^{\infty}(\Omega)}\|u^{i}\|_{ L^2(\Omega)},
\end{align*}
which gives
\begin{align*}
\| S(q_1)u^{\rm i}-S(q_2)u^{\rm i}\|_{ H^1(\Omega)}\leq C\|
q_1-q_2\|_{L^{\infty}(\Omega)}\|u^{\rm i}\|_{
L^2(\Omega)},
\end{align*}
where the constant $C$ depends on $\Omega, k_0$, and $\|
q_2\|_{ L^{\infty}(\Omega)}$. \qedhere
\end{proof}

Let $\gamma$ be the restriction (trace) operator to the boundary
$\Gamma$. By the trace theorem, $\gamma$ is a bounded linear
operator from $H^1(\Omega)$ onto $H^{\frac{1}{2}}(\Gamma)$. We can now
define the scattering map $M(q)=\gamma S(q).$

Next, consider the Fr\'{e}chet differentiability of the scattering
map. Recall the map $S(q)$ is nonlinear with respect to $q$.
Formally, by using the first order perturbation theory, we obtain
the linearized scattering problem of \eqref{scateq}, \eqref{abc}
with respect to a reference scatterer $q$,
\begin{align}\label{ptb}
&\Delta v+k_0^2(1+q)v=-k_0^2\delta q(u^{\rm i}+u^{\rm s}),\\
&\frac{\partial v}{ \partial n}-{\rm i} k_0 v=0,\nonumber
\end{align}
where $u^{\rm s}=S(q)u^{\rm i}$.

Define the formal linearzation $T(q)(\delta q)$ of the map $S(q)u^{\rm i}$ by
$v=T(q)(\delta q, u^{\rm i})$, where $v$ is the solution of the
problem \eqref{ptb}. The following is a boundedness
result for the map $T(q)$. A proof may be given by following step by
step the proofs of Lemma ~\ref{bdnSq}. Hence we omit it here.
\begin{lemma}\label{bndTq}
Assume that $q, \delta q\in L^{\infty}(\Omega)$ and $u^{\rm i}$ is the
incident field. Then $v=T(q)(\delta q, u^{\rm i})\in H^1(\Omega)$ with
the estimate
\begin{align}
\| T(q)(\delta q, u^{\rm i})\|_{ H^1(\Omega)}\leq C\|\delta
q\|_{ L^{\infty}(\Omega)}\|u^{\rm i}\|_{
L^2(\Omega)},\label{ineqTq}
\end{align}
where the constant $C$ depends on $k_0, \Omega$, and $\|
q\|_{ L^{\infty}(\Omega)}$.
\end{lemma}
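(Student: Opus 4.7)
The plan is to mirror the proof of Lemma~\ref{bdnSq} verbatim, treating the right-hand side $-k_0^2 \delta q (u^{\rm i}+u^{\rm s})$ as a new forcing term. The PDE \eqref{ptb} together with the absorbing boundary condition has exactly the same bilinear form $a(\phi,\psi)=(\nabla\phi,\nabla\psi)-k_0^2((1+q)\phi,\psi)-{\rm i}k_0\langle\phi,\psi\rangle$ as in \eqref{varprob}; only the linear functional on the right changes, namely $b(\psi)=(k_0^2 \delta q(u^{\rm i}+u^{\rm s}),\psi)$. So first I would write the variational formulation of \eqref{ptb}.

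Next I would reuse the Fredholm-alternative/Lax--Milgram machinery from Lemma~\ref{bdnSq}. Splitting $a=a_1+k_0^2 a_2$, invoking coercivity of $a_1$ and compactness of the operator $\mathcal A$ constructed from $a_2$, and using the unique-continuation-based uniqueness result already established, one obtains existence of a unique $v\in H^1(\Omega)$ together with the estimate
\begin{align*}
\|v\|_{H^1(\Omega)} \leq C\,\|\delta q\|_{L^\infty(\Omega)}\,\|u^{\rm i}+u^{\rm s}\|_{L^2(\Omega)},
\end{align*}
where $C=C(k_0,\Omega,\|q\|_{L^\infty})$. This is the direct transcription of \eqref{ineqSq} with $\delta q$ replacing $q$ and $u^{\rm i}+u^{\rm s}$ replacing $u^{\rm i}$.

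Finally I would control $\|u^{\rm i}+u^{\rm s}\|_{L^2(\Omega)}$ by the data $u^{\rm i}$ alone. By the triangle inequality and Lemma~\ref{bdnSq} applied to $u^{\rm s}=S(q)u^{\rm i}$,
\begin{align*}
\|u^{\rm i}+u^{\rm s}\|_{L^2(\Omega)} \leq \|u^{\rm i}\|_{L^2(\Omega)} + \|S(q)u^{\rm i}\|_{H^1(\Omega)} \leq \bigl(1 + C k_0^2\|q\|_{L^\infty(\Omega)}\bigr)\|u^{\rm i}\|_{L^2(\Omega)},
\end{align*}
and substituting this back into the previous inequality yields \eqref{ineqTq} with $C$ depending on $k_0$, $\Omega$, and $\|q\|_{L^\infty(\Omega)}$, as claimed.

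There is no real obstacle: the only minor care point is bookkeeping the dependencies of the generic constant $C$, in particular making sure the $\|q\|_{L^\infty}$ dependence that enters via the operator $\mathcal I + k_0^2\mathcal A$ in \eqref{ineq3} and via the bound on $\|u^{\rm s}\|$ is absorbed into the final $C$, so that the stated dependence on $k_0,\Omega,\|q\|_{L^\infty}$ is correct and, crucially, the right-hand side depends linearly on $\|\delta q\|_{L^\infty}$ and on $\|u^{\rm i}\|_{L^2}$.
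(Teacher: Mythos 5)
Your proposal is correct and is exactly the route the paper intends: the paper itself omits this proof, stating only that it ``may be given by following step by step the proofs of Lemma~\ref{bdnSq},'' which is precisely what you do, including the necessary extra step of bounding $\|u^{\rm i}+u^{\rm s}\|_{L^2(\Omega)}$ by $\|u^{\rm i}\|_{L^2(\Omega)}$ via Lemma~\ref{bdnSq} so that the constant absorbs the dependence on $\|q\|_{L^{\infty}(\Omega)}$.
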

The next lemma is concerned with the continuity property of the map.
\begin{lemma}
For any $q_1, q_2\in L^{\infty}(\Omega)$ and an incident field
$u^{\rm i}$, the following estimate holds

\begin{align}
\| T(q_1)(\delta q, u^{\rm i})-T(q_2)(\delta q, u^{\rm i})\|_{
H^1(\Omega)}\leq C\| q_1-q_2\|_{
L^{\infty}(\Omega)}\|\delta q\|_{
L^{\infty}(\Omega)}\|u^{\rm i}\|_{
L^2(\Omega)},\label{ineqcontTq}
\end{align}

where the constant $C$ depends on $k_0, \Omega$, and $\|
q_2\|_{ L^{\infty}(\Omega)}$.
\end{lemma}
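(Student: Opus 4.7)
The plan is to mimic the subtraction trick that was used in the proof of Lemma~\ref{contSq}, only one level higher: set $v_j = T(q_j)(\delta q, u^{\rm i})$ for $j=1,2$ and look at $w := v_1 - v_2$. Writing out the two linearized equations
\begin{align*}
\Delta v_1 + k_0^2(1+q_1)v_1 &= -k_0^2\,\delta q\,(u^{\rm i}+u^{\rm s}_1),\\
\Delta v_2 + k_0^2(1+q_2)v_2 &= -k_0^2\,\delta q\,(u^{\rm i}+u^{\rm s}_2),
\end{align*}
with $u^{\rm s}_j=S(q_j)u^{\rm i}$, and subtracting, the difference $w$ should satisfy
\begin{align*}
\Delta w + k_0^2(1+q_1) w = -k_0^2 (q_1-q_2) v_2 - k_0^2\,\delta q\,(u^{\rm s}_1-u^{\rm s}_2),
\end{align*}
together with the absorbing boundary condition $\partial_n w - {\rm i} k_0 w = 0$, since both $v_j$ satisfy it.

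Next, I would regard this as a scattering problem with coefficient $q_1$ and effective right-hand side $f := -k_0^2[(q_1-q_2) v_2 + \delta q\,(u^{\rm s}_1-u^{\rm s}_2)]$. The Fredholm-alternative argument underlying Lemma~\ref{bdnSq} applies essentially verbatim (only the data term $b$ is replaced by $(f,\xi)$), yielding an estimate of the form
\begin{align*}
\|w\|_{H^1(\Omega)} \leq C\,\bigl\|(q_1-q_2) v_2 + \delta q\,(u^{\rm s}_1-u^{\rm s}_2)\bigr\|_{L^2(\Omega)},
\end{align*}
where $C$ depends on $k_0$, $\Omega$, and $\|q_1\|_{L^\infty(\Omega)}$. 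This is the step that the author almost certainly wants to omit by reference to Lemma~\ref{bdnSq}.

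The remainder is bookkeeping via H\"older's inequality on each source term. For the first term, $\|(q_1-q_2)v_2\|_{L^2} \le \|q_1-q_2\|_{L^\infty}\|v_2\|_{L^2}$, and Lemma~\ref{bndTq} applied to $v_2=T(q_2)(\delta q,u^{\rm i})$ gives $\|v_2\|_{L^2}\le\|v_2\|_{H^1}\le C\|\delta q\|_{L^\infty}\|u^{\rm i}\|_{L^2}$, with constant depending on $\|q_2\|_{L^\infty}$. For the second term, $\|\delta q\,(u^{\rm s}_1-u^{\rm s}_2)\|_{L^2} \le \|\delta q\|_{L^\infty}\|u^{\rm s}_1-u^{\rm s}_2\|_{L^2}$, and Lemma~\ref{contSq} supplies $\|u^{\rm s}_1-u^{\rm s}_2\|_{L^2}\le\|u^{\rm s}_1-u^{\rm s}_2\|_{H^1}\le C\|q_1-q_2\|_{L^\infty}\|u^{\rm i}\|_{L^2}$. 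Plugging both bounds into the estimate for $w$ yields \eqref{ineqcontTq}.

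The only real obstacle is propagating the constant correctly: the bound on $w$ depends on $\|q_1\|_{L^\infty}$ (through the $(1+q_1)$ coefficient), while Lemma~\ref{bndTq} for $v_2$ and Lemma~\ref{contSq} for $u^{\rm s}_1 - u^{\rm s}_2$ each contribute dependence on $\|q_2\|_{L^\infty}$. The clean statement in the lemma attributes the constant only to $\|q_2\|_{L^\infty}$, so one should either assume $\|q_1\|_{L^\infty}$ is controlled by $\|q_2\|_{L^\infty}+\|q_1-q_2\|_{L^\infty}$ and absorb the extra piece into the $\|q_1-q_2\|_{L^\infty}$ factor, or simply allow $C$ to depend on both $\|q_1\|_{L^\infty}$ and $\|q_2\|_{L^\infty}$. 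Beyond that accounting, the proof is a direct combination of the three previous lemmas and should be no longer than a half page.
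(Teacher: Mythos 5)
Your proposal follows essentially the same route as the paper's own proof: subtract the two linearized problems to get $\Delta(v_1-v_2)+k_0^2(1+q_1)(v_1-v_2)=-k_0^2\delta q\,(u^{\rm s}_1-u^{\rm s}_2)-k_0^2(q_1-q_2)v_2$, apply the a priori estimate from Lemma~\ref{bdnSq} to this source term, and then bound the two pieces via Lemma~\ref{contSq} and Lemma~\ref{bndTq}. Your side remark about the constant's dependence on $\|q_1\|_{L^{\infty}(\Omega)}$ versus $\|q_2\|_{L^{\infty}(\Omega)}$ is a fair point that the paper glosses over, but it does not change the argument.
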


\begin{proof}
Let $v_j=T(q_j)(\delta q, u^{\rm i})$, for $j=1, 2$. It is easy to see
that
\begin{align*}
\Delta (v_1-v_2)&+k_0^2(1+q_1)(v_1-v_2)=-k_0^2\delta q(u^{\rm s}_1-u^{\rm s}_2)-k_0^2(q_1-q_2)v_2,
\end{align*}
where $u^{\rm s}_j=S(q_j)u^{\rm i}$.

Similar to the proof of Lemma~\ref{bdnSq}, we get
\begin{align*}
\| v_1-v_2\|_{ H^1(\Omega)}\leq C(\|\delta
q\|_{
L^{\infty}(\Omega)}\|u^{\rm s}_1-u^{s}_2\|_{
H^1(\Omega)}+\| q_1-q_2\|_{ L^{\infty}(\Omega)}\|
v_2\|_{ H^1(\Omega)}).
\end{align*}
From Lemma~\ref{contSq} and Lemma~\ref{bndTq}, we obtain
\begin{align*}
\| v_1-v_2\|_{ H^1(\Omega)}\leq C\|
q_1-q_2\|_{ L^{\infty}(\Omega)}\|\delta
q\|_{ L^{\infty}(\Omega)}\|u^{\rm i}\|_{
L^2(\Omega)},
\end{align*}
which completes the proof. \qedhere
\end{proof}

The following result concerns the differentiability property of
$S(q)$.

\begin{lemma}
Assume that $q, \delta q\in L^{\infty}(\Omega)$. Then there is a
constant $C$ dependent of $k_0, \Omega$, and $\| q\|_{
L^{\infty}(\Omega)}$, for which the following estimate holds
\begin{align}
\| S(q+\delta q)u^{\rm i}-S(q)u^{\rm i}-T(q)(\delta q, u^{\rm i})\|_{
H^1(\Omega)}\leq C\|\delta q\|^2_{
L^{\infty}(\Omega)}\|u^{\rm i}\|_{
L^2(\Omega)}.\label{ineqdifSq}
\end{align}
\end{lemma}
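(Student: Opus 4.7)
The plan is to set $r = S(q+\delta q)u^{\rm i}-S(q)u^{\rm i}-T(q)(\delta q, u^{\rm i})$ and show that $r$ itself solves a Helmholtz boundary-value problem of the form treated in Lemmas~\ref{bdnSq} and~\ref{contSq}, but with a source term that is \emph{quadratically} small in $\delta q$. Once the right cancellations are in place, the estimate follows by applying the stability bound to this new problem.

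First I would write down the three defining equations: setting $u^{\rm s}_1=S(q+\delta q)u^{\rm i}$, $u^{\rm s}=S(q)u^{\rm i}$, and $v=T(q)(\delta q,u^{\rm i})$, the function $u^{\rm s}_1$ satisfies \eqref{scateq} with scatterer $q+\delta q$, $u^{\rm s}$ satisfies \eqref{scateq} with scatterer $q$, and $v$ satisfies \eqref{ptb}. Rewriting the equation for $u^{\rm s}_1$ as
\begin{align*}
\Delta u^{\rm s}_1+k_0^2(1+q)u^{\rm s}_1=-k_0^2(q+\delta q)u^{\rm i}-k_0^2\delta q\, u^{\rm s}_1,
\end{align*}
and subtracting the equations for $u^{\rm s}$ and $v$, the terms involving $u^{\rm i}$ and the $\delta q\, u^{\rm s}$ contribution from \eqref{ptb} cancel, leaving
\begin{align*}
\Delta r+k_0^2(1+q)r=-k_0^2\delta q\,(u^{\rm s}_1-u^{\rm s}).
\end{align*}
Since all three functions satisfy the absorbing boundary condition \eqref{abc}, so does $r$.

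Next I would observe that this is exactly a direct scattering problem of the form \eqref{scateq}--\eqref{abc} with scatterer $q$ and an $L^{2}(\Omega)$ right-hand side $f:=-k_0^2\delta q\,(u^{\rm s}_1-u^{\rm s})$ in place of $-k_0^2 q u^{\rm i}$. Repeating the Lax--Milgram plus Fredholm argument from Lemma~\ref{bdnSq} (or equivalently appealing to the same stability estimate that underlies Lemma~\ref{contSq}) gives
\begin{align*}
\|r\|_{H^1(\Omega)}\le C\|f\|_{L^2(\Omega)}\le C\|\delta q\|_{L^{\infty}(\Omega)}\|u^{\rm s}_1-u^{\rm s}\|_{L^2(\Omega)},
\end{align*}
with $C$ depending on $k_0$, $\Omega$, and $\|q\|_{L^{\infty}(\Omega)}$. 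Finally, applying Lemma~\ref{contSq} to bound $\|u^{\rm s}_1-u^{\rm s}\|_{H^1(\Omega)}\le C\|\delta q\|_{L^{\infty}(\Omega)}\|u^{\rm i}\|_{L^2(\Omega)}$ and using $\|\cdot\|_{L^2}\le\|\cdot\|_{H^1}$ closes the estimate and produces the quadratic factor $\|\delta q\|^2_{L^{\infty}(\Omega)}$.

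The only delicate point, and the step where it is easy to slip, is the algebraic manipulation in deriving the PDE for $r$: one must verify that all linear-in-$\delta q$ terms on the right-hand side cancel so that only the product $\delta q\,(u^{\rm s}_1-u^{\rm s})$ survives. Everything else is a direct reuse of the stability machinery already established, so I do not anticipate any further technical obstacle.
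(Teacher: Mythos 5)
Your proposal is correct and follows essentially the same route as the paper: you form the remainder $r=U$, verify that the linear-in-$\delta q$ terms cancel so that $r$ solves the Helmholtz problem with source $-k_0^2\delta q\,(u^{\rm s}_1-u^{\rm s})$ and the boundary condition \eqref{abc}, and then combine the stability estimate of Lemma~\ref{bdnSq} with the Lipschitz bound of Lemma~\ref{contSq} to obtain the quadratic factor. The only difference is cosmetic: you explicitly cite Lemma~\ref{contSq} for the bound on $\|u^{\rm s}_1-u^{\rm s}\|$, which is in fact the correct reference, whereas the paper loosely attributes that step to Lemma~\ref{bdnSq}.
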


\begin{proof}
By setting $u^{\rm s}_1=S(q)u^{\rm i}, u^{\rm s}_2=S(q+\delta q)u^{\rm i}$, and
$v=T(q)(\delta q, u^{\rm i})$, we have
\begin{align}
&\Delta u_1^{\rm s}+k_0^2(1+q)u^{\rm s}_1=-k_0^2 q u^{\rm i},\\
&\Delta u_2^{\rm s}+k_0^2(1+q+\delta q)u^{\rm s}_2=-k_0^2(q+\delta q)u^{\rm i},\\
&\Delta v+k_0^2(1+q)v=-k_0^2 \delta q u^{\rm s}_1-k_0^2\delta q u^{\rm i}.
\end{align}
In addition, $u^{\rm s}_1, u^{\rm s}_2$, and $v$ satisfy the boundary
condition \eqref{abc}.

Denote $U=u^{\rm s}_2-u^{\rm s}_1-v$. Then
\begin{align}
\Delta U+k_0^2(1+q)U=-k_0^2\delta q(u^{\rm s}_2-u^{\rm s}_1).
\end{align}
Similar arguments as in the proof of Lemma~\ref{bdnSq} give
\begin{align*}
\| U\|_{ H^1(\Omega)}\leq C\|\delta
q\|_{
L^{\infty}(\Omega)}\|u^{\rm s}_2-u^{\rm s}_1\|_{ H^1(\Omega)}.
\end{align*}
From Lemma ~\ref{bdnSq}, we obtain further that
\begin{align*}
\| U\|_{ H^1(\Omega)}\leq C\|\delta
q\|^2_{ L^{\infty}(\Omega)}\|u^{\rm i}\|_{
L^2(\Omega)}.
\end{align*} \qedhere
\end{proof}
Finally, by combining the above lemmas, we arrive at
\begin{thm}
The scattering map $M(q)$ is Fr\'{e}chet differentiable with respect
to $q$ and its Fr\'{e}chet derivative is
\begin{align}
{\rm D}M(q)=\gamma T(q).\label{frechet}
\end{align}
\end{thm}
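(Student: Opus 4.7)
The plan is to reduce Fréchet differentiability of $M(q)$ to the already established $H^1$-estimate for $S$ together with the boundedness of the trace operator. Recall that $M(q)=\gamma S(q)$, and the previous lemma provides the quadratic remainder estimate
\begin{align*}
\| S(q+\delta q)u^{\rm i}-S(q)u^{\rm i}-T(q)(\delta q, u^{\rm i})\|_{H^1(\Omega)}\leq C\|\delta q\|^2_{L^{\infty}(\Omega)}\|u^{\rm i}\|_{L^2(\Omega)},
\end{align*}
while Lemma \ref{bndTq} tells us that $T(q)(\cdot,u^{\rm i})$ is a bounded linear map on $L^\infty(\Omega)$ with values in $H^1(\Omega)$.

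First I would verify that the candidate derivative $\gamma T(q)$ is a bounded linear operator from $L^\infty(\Omega)$ into $H^{1/2}(\Gamma)$: linearity in $\delta q$ is clear from the defining problem \eqref{ptb}, and boundedness follows from Lemma \ref{bndTq} composed with the trace theorem, since $\gamma:H^1(\Omega)\to H^{1/2}(\Gamma)$ is bounded.

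Second, I would apply the trace operator to the remainder $R(\delta q):=S(q+\delta q)u^{\rm i}-S(q)u^{\rm i}-T(q)(\delta q,u^{\rm i})$ and use boundedness of $\gamma$ together with the quadratic estimate to obtain
\begin{align*}
\|M(q+\delta q)u^{\rm i}-M(q)u^{\rm i}-\gamma T(q)(\delta q,u^{\rm i})\|_{H^{1/2}(\Gamma)}
&=\|\gamma R(\delta q)\|_{H^{1/2}(\Gamma)}\\
&\leq C\|R(\delta q)\|_{H^1(\Omega)}\\
&\leq C\|\delta q\|^2_{L^{\infty}(\Omega)}\|u^{\rm i}\|_{L^2(\Omega)}.
\end{align*}
Dividing by $\|\delta q\|_{L^\infty(\Omega)}$ and sending $\|\delta q\|_{L^\infty(\Omega)}\to 0$ then shows the Fréchet derivative exists and equals $\gamma T(q)$.

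There is no real obstacle here since all heavy lifting was done in the preceding lemmas; the only minor point to be careful about is the functional-analytic bookkeeping, namely identifying the right spaces (input $L^\infty(\Omega)$ for $\delta q$, output $H^{1/2}(\Gamma)$ for $M$) and invoking the trace theorem correctly so that the $H^1$-remainder estimate transfers to the boundary norm. Once that is in place, the conclusion \eqref{frechet} is immediate.
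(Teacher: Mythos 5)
Your proposal is correct and follows exactly the route the paper intends: the paper simply states the theorem follows ``by combining the above lemmas,'' and your argument---applying the bounded trace operator $\gamma$ to the quadratic remainder estimate \eqref{ineqdifSq} and invoking Lemma~\ref{bndTq} for linearity and boundedness of the candidate derivative---is the natural way to fill in that one-line justification. No further comment is needed.
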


\section{Inverse Medium Scattering}
In this section, a regularized recursive linearization method for
solving the inverse scattering problem of the Helmholtz equation in
two dimensions is proposed. The algorithm requires multi-frequency
Dirichlet and Neumann scattering data, and the recursive
linearization is obtained by a continuation method on the wavenumber
$k_0$. It first solves a linear equation (Born approximation) at the
lowest $k_0$, which gives the initial guess of $q(k_0,x)$. Updates
are subsequently obtained by using a sequence of increasing
wavenumbers. For each iteration, one forward and one adjoint
equation are solved. 

Let $\Omega$ be the circle that contains the underlying medium; let
$\Gamma=\partial \Omega$ be the surface; let
$\Omega_0=\mathbb{R}^2/\bar{\Omega}$.
The inverse problem can be stated as follows. Given $u^{\rm s}(x; k )$ for all $x\in\Gamma$,
and all $k > 0$, find the function $q (x)$, $x\in\Omega$, assuming that conditions \eqref{helmb}-\eqref{boundarycond} hold:
\begin{align}
&\Delta u+k_0^2(1+q) u=0 \quad & \textrm{in } \Omega,\label{helmb}\\
&\Delta u^0+k_0^2 u^0=0 \quad & \textrm{in } \Omega_0,\label{helm0}\\
&u=u^0=u^{\rm s}+u^{\rm i} \quad & \textrm{on } \Gamma,\label{jump1}\\
&\frac{\partial u}{\partial n}=\frac{\partial
u^{\rm s}}{\partial n}+\frac{\partial u^{\rm i}}{\partial n} \quad & \textrm{on } \Gamma,\label{jump2}\\
&\frac{\partial u^{\rm s}}{\partial n}-{\rm i} k_0 u^{\rm s}=0 \quad & \textrm{on }
\Gamma,\label{boundarycond}
\end{align}

where \eqref{jump1} and \eqref{jump2} are the jump conditions on the
surface of the medium.

\subsection{Born Approximation}\label{bornSec}
Define a test function $\hat{u}=\exp( {\rm i} k_0
{\bm x}\cdot {\bm d_2}), {\bm d_2}=(\cos\theta, \sin\theta), \theta\in [0,
2\pi]$. Hence $\hat{u}$ satisfies:
\begin{align}
\Delta \hat{u}+k_0^2\hat{u}=0 \quad \textrm{in } \Omega.\label{uref}
\end{align}

Multiplying the equation \eqref{helmb} by $\hat{u}$, and integrating
over $\Omega$ on both sides, we have
\begin{align*}
\int_{ \Omega}\hat{u} \Delta u
{\rm d}x+k_0^2\int_{ \Omega} (1+q)\hat{u} u {\rm d}x=0.
\end{align*}
Integration by parts yields
\begin{align*}
\int_{ \Omega} \Delta \hat{u}u {\rm d}x+\int_{
\Omega}k_0^2 (1+q)u\hat{u}{\rm d}x+\int_{
\Gamma}(\hat{u}\frac{\partial u}{\partial n}-u\frac{\partial
\hat{u}}{\partial n}){\rm d}s=0.
\end{align*}
We have by noting \eqref{uref} and the jump conditions \eqref{jump1}
and \eqref{jump2} that
\begin{align*}
\int_{ \Omega}k_0^2 q u\hat{u}{\rm d}x=\int_{
\Gamma} (u^{\rm s} \frac{\partial\hat{u}}{\partial n}-
\hat{u}\frac{\partial u^{\rm s}}{\partial n}){\rm d}s+\int_{
\Gamma} (u^{\rm i} \frac{\partial\hat{u}}{\partial n}-
\hat{u}\frac{\partial u^{\rm i}}{\partial n}){\rm d}s,
\end{align*}
where we take into account that $q$ has compact support in $\Omega$.
Using the special form of the incident wave and the test function,
we then get
\begin{align}\label{born}
\begin{split}
\int_{ \Omega}k_0^2 q u& \exp( {\rm i} k_0
{\bm x}\cdot {\bm d_1}){\rm d}x\\
&=\int_{ \Gamma}({\rm i} k_0
{\bm n}\cdot {\bm d_1}\exp( {\rm i} k_0 {\bm x}\cdot
{\bm d_1})u^{\rm s}-\exp({\rm i} k_0 {\bm x}\cdot {\bm d_1})\frac{\partial
u^{\rm s}}{\partial n}){\rm d}s\\
&\quad+\int_{
\Gamma}\exp( {\rm i} k_0 {\bm x}\cdot{\bm d_2}+{\rm i} k_0
{\bm x}\cdot {\bm d_1})({\rm i} k_0 {\bm n}\cdot {\bm d_1}-{\rm i} k_0 {\bm n}\cdot
{\bm d_2}){\rm d}s.
\end{split}
\end{align}

From Lemma~\ref{bdnSq} and Remark~\ref{rmkweak}, for a small
wavenumber, the scattered field is weak comparing to the incident field, so $u=u^{\rm s}+u^{\rm i}\approx u^{\rm i}$. 
We drop the nonlinear term of \eqref{born} and obtain the linearized integral equation
\begin{align}\label{borndropped}
\begin{split}
\int_{ \Omega}k_0^2q_0&(x) \exp(
{\rm i} k_0{\bm x}\cdot{\bm d_2}+{\rm i} k_0{\bm x}\cdot{\bm d_1}){\rm d}x\\
&=\int_{
\Gamma}({\rm i} k_0 {\bm n}\cdot{\bm d_1}\exp( {\rm i} k_0 {\bm x}\cdot
{\bm d_1})u^{\rm s}-\exp( {\rm i} k_0 {\bm x}\cdot {\bm d_1})\frac{\partial
u^{\rm s}}{\partial n}){\rm d}s\\
&\quad+\int_{ \Gamma}\exp( {\rm i} k_0
{\bm x}\cdot{\bm d_2}+{\rm i} k_0 {\bm x}\cdot{\bm d_1})({\rm i} k_0 {\bm n}\cdot {\bm d_1}-
{\rm i} k_0 {\bm n}\cdot {\bm d_2}){\rm d}s,
\end{split}
\end{align}
which is the Born approximation.

Since the scatterer $q_0(k_0,x)$ has a compact support, we use the
notation
\begin{align}
\hat{q_0}(\xi)=\int_{\Omega} q_0 (x)\exp(
{\rm i} k_0 {\bm x}\cdot{\bm d_2}+{\rm i} k_0 {\bm x}\cdot{\bm d_1}){\rm d}x,
\end{align}
where $\hat{q}_0(\xi)$ is the Fourier transform of $q_0(x)$ with
$\xi=k_0({\bm d_1}+{\bm d_2})$. Choose
\begin{align}
{\bm d_j}=\left(\cos\theta_j, \sin\theta_j\right),\quad j=1,2,
\end{align}
where $\theta_j$ are spherical angles. It is obvious that the domain
$[0, 2\pi]$ of $\theta_j, j=1, 2$, corresponds to the ball
$\{\xi\in\mathbb{R}^2: |\xi|\leq 2 k_0\}$. Thus, the Fourier
modes of $\hat{q}_0 (\xi)$ in the ball $\{\xi: |\xi|\leq
2 k_0\}$ can be determined. The scattering data with the higher
wavenumber must be used in order to recover more modes of the true
scatterer.

The integral equation \eqref{borndropped} can be written as the
operator form
\begin{align}
A(k_0, \theta; x)q(x)=f(k_0,\theta).\label{bornq}
\end{align}
Thus
\begin{align}
q_0(k_0,x)=(A*A+\alpha I)^{-1}f,
\end{align}
where $\alpha$ is a relaxation parameter. $q_0$ is used as the starting point of the following recursive
linearization algorithm.

\subsection{Recursive Linearization}\label{recursive}
As discussed in the previous section, when the wavenumber is small,
the Born approximation allows a reconstruction of those Fourier
modes less than or equal to $2 k_0$ for the function $q(x)$. We
now describe a procedure that recursively determines
$q_{k_0}$ at $k_0=k_j$ for $j=1,2,...$ with the
increasing wavenumbers. Suppose now that the scatterer
$q_{\tilde{k}}$ has been recovered at some wavenumber $\tilde{k}$,
and that the wavenumber $k$ is slightly larger than $\tilde{k}$. We
wish to determine $q_k$, or equivalently, to determine the
perturbation
\begin{align}
\delta q=q_{k}-q_{\tilde{k}}.
\end{align}

For the reconstructed scatterer $q_{\tilde{k}}$, we solve at the
wavenumber $k$ the forward scattering problem
\begin{align}\label{forwardproblem2}
&\Delta \tilde{u}+k^2(1+q_{\tilde{k}}(k,x))\tilde{u}=0 \quad & \textrm{in } \Omega,\\
&\Delta \tilde{u}^0+k^2 \tilde{u}^0=0 & \textrm{in }
\Omega_0,\nonumber\\
&\tilde{u}=\tilde{u}^0=\tilde{u}^{\rm s}+u^{\rm i} & \textrm{on } \Gamma,\nonumber\\
&\frac{\partial \tilde{u}}{\partial n}=\frac{\partial
\tilde{u}^{\rm s}}{\partial n}+\frac{\partial u^{\rm i}}{\partial n} & \textrm{on } \Gamma,\nonumber\\
&\frac{\partial \tilde{u}^{\rm s}}{\partial n}-{\rm i} k \tilde{u}^{\rm s}=0 &
\textrm{on } \Gamma .\nonumber
\end{align}
For the scatterer $q_k$, we have
\begin{align}\label{forwardproblem1}
&\Delta u+k^2(1+q_k(k,x))u=0 \quad & \textrm{in } \Omega,\\
&\Delta u^0+k^2 u^0=0 & \textrm{in } \Omega_0,\nonumber\\
&u=u^0=u^{\rm s}+u^{\rm i} & \textrm{on } \Gamma,\nonumber\\
&\frac{\partial u}{\partial n}=\frac{\partial
u^{\rm s}}{\partial n}+\frac{\partial u^{\rm i}}{\partial n} & \textrm{on } \Gamma,\nonumber\\
&\frac{\partial u^{\rm s}}{\partial n}-{\rm i} k u^{\rm s}=0 & \textrm{on }
\Gamma.\nonumber
\end{align}
Subtracting \eqref{forwardproblem2} from \eqref{forwardproblem1} and
omitting the second-order smallness in $\delta q$ and in $\delta
u=u-\tilde{u}$, we obtain
\begin{align}\label{deltaproblem}
&\Delta \delta u+k^2(1+q_{\tilde{k}}(k,x))\delta u=-k^2 \delta q \tilde{u} \quad & \textrm{in } \Omega,\\
&\Delta \delta u^0+k^2 \delta u^0=0 & \textrm{in }
\Omega_0,\nonumber\\
&\delta u=\delta u^0=\delta u^{\rm s} &\textrm{on } \Gamma,\nonumber\\
&\frac{\partial \delta u}{\partial n}= \frac{\partial
\delta u^{\rm s}}{\partial n} & \textrm{on } \Gamma,\nonumber\\
&\frac{\partial \delta u^{\rm s}}{\partial n}-{\rm i} k \delta u^{\rm s}=0 & \textrm{on
} \Gamma.\nonumber
\end{align}

For the scatterer $q_k$, we define the scattering map
\begin{align}
M(q_k)=u^0|_{ \Gamma},
\end{align}
where $u^0$ is the total field data corresponding to the incident
wave $u^{\rm i}$. Let ${\rm D}M(q_{k})$ be the Fr\'{e}chet derivative of $M(q_k)$ and denote
the residual operator by
\begin{align}
R(q_{\tilde{k}})=u^0|_{
\Gamma}-\tilde{u}^0|_{ \Gamma}.
\end{align}
It follows from \eqref{frechet} that
\begin{align}
{\rm D}M(q_{\tilde{k}})\delta q=R(q_{\tilde{k}}).\label{frecheteq}
\end{align}
In order to reduce the computation cost and instability, we consider
the one-step Landweber iteration of \eqref{frecheteq}, which has the form
\begin{align}
\delta q=\beta {\rm D}M^*(q_{\tilde{k}})R(q_{\tilde{k}}),\label{landweber}
\end{align}
where $\beta$ is a relaxation parameter and ${\rm D}M^*(q_{\tilde{k}})$ is
the adjoint operator of ${\rm D}M(q_{\tilde{k}})$.

In order to compute the correction $\delta q$, we need some
efficient way to compute ${\rm D}M^*(q_{\tilde{k}}) R(q_{\tilde{k}})$,
which is given by the following theorem.
\begin{thm}
Given residual $R(q_{\tilde{k}})$, the adjoint Fr\'{e}chet derivative ${\rm D}M^*(q_{\tilde{k}})$
satisfies
\begin{align}
[{\rm D}M^*(q_{\tilde{k}}) R(q_{\tilde{k}})
](x)=\bar{\tilde{u}}(x)\cdot\psi,\label{landwebereq}
\end{align}
where $\tilde{u}$ is the solution of \eqref{forwardproblem2}, $\psi$ is the solution of the
adjoint problem of \eqref{deltaproblem}.
\end{thm}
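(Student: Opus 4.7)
The plan is to prove the identity by the standard adjoint-state technique: realize $\mathrm{D}M^*(q_{\tilde k})R(q_{\tilde k})$ through duality in the complex $L^2$ inner product introduced earlier, and then use Green's second identity to transfer boundary data to a volume integral. Concretely, for every admissible perturbation $\delta q\in L^\infty(\Omega)$, I want to verify
\begin{align*}
\langle \mathrm{D}M(q_{\tilde k})\delta q,\,R(q_{\tilde k})\rangle_\Gamma
= \langle \delta q,\,\bar{\tilde u}\,\psi\rangle_\Omega,
\end{align*}
which, by definition of the adjoint, forces $[\mathrm{D}M^*(q_{\tilde k})R(q_{\tilde k})](x)=\bar{\tilde u}(x)\psi$.

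First I would identify the left-hand side. By the Fr\'echet derivative formula \eqref{frechet} and the linearized system \eqref{deltaproblem}, $\mathrm{D}M(q_{\tilde k})\delta q = \delta u|_\Gamma$, where $\delta u$ solves $\Delta\delta u+k^{2}(1+q_{\tilde k})\delta u = -k^{2}\delta q\,\tilde u$ in $\Omega$ together with the absorbing boundary condition $\partial_n\delta u^{\mathrm s}-\mathrm{i}k\,\delta u^{\mathrm s}=0$ on $\Gamma$. Next I would define $\psi$ as the solution of the \emph{adjoint} problem associated with \eqref{deltaproblem}: take $\psi$ so that $\bar\psi$ satisfies the Helmholtz-type equation $\Delta\bar\psi+k^{2}(1+q_{\tilde k})\bar\psi=0$ inside $\Omega$, the free Helmholtz equation outside, the matching jump conditions across $\Gamma$, and an adjoint absorbing/radiation condition on $\Gamma$ in which the residual $R(q_{\tilde k})$ plays the role of the Dirichlet/Neumann datum (so that, after the Green identity computation below, the boundary pairing reproduces $\int_\Gamma\delta u\,\bar R\,ds$). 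The $k^{2}$ factor coming from the right-hand side of \eqref{deltaproblem} is absorbed into the normalization of $\psi$.

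Then the main computation is to apply Green's second identity to the pair $(\delta u,\bar\psi)$ on $\Omega$:
\begin{align*}
\int_\Omega\bigl(\bar\psi\,\Delta\delta u - \delta u\,\Delta\bar\psi\bigr)\,dx
= \int_\Gamma\bigl(\bar\psi\,\partial_n\delta u - \delta u\,\partial_n\bar\psi\bigr)\,ds.
\end{align*}
Substituting the PDE for $\delta u$ and the adjoint PDE for $\bar\psi$ makes the volume terms proportional to $(1+q_{\tilde k})\delta u\,\bar\psi$ cancel, leaving $-k^{2}\int_\Omega\delta q\,\tilde u\,\bar\psi\,dx$ on the left. On the right, the ABC $\partial_n\delta u=\mathrm{i}k\,\delta u$ on $\Gamma$ together with the adjoint boundary condition collapses the boundary integral to (a constant multiple of) $\int_\Gamma\delta u\,\bar R\,ds$. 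Rearranging gives exactly $\langle\delta u,R\rangle_\Gamma=\langle\delta q,\bar{\tilde u}\psi\rangle_\Omega$, which yields \eqref{landwebereq}.

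The main obstacle I expect is specifying the adjoint problem precisely enough that the boundary integral after Green's identity is cleanly $\int_\Gamma\delta u\,\bar R\,ds$ with the correct sign and normalization. Because $q_{\tilde k}$ is complex (it encodes the conductivity through $\mathrm{i}\sigma/k_0$), the adjoint Helmholtz operator uses the conjugate coefficients, and the ABC $\partial_n-\mathrm{i}k$ becomes $\partial_n+\mathrm{i}k$ for $\psi$; tracking these conjugations, together with the $k^{2}$ factor from the source in \eqref{deltaproblem}, is where bookkeeping errors are most likely, and is where the proof has to be done carefully rather than by hand-waving.
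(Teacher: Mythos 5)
Your proposal follows essentially the same adjoint-state argument as the paper: pair the linearized equation \eqref{deltaproblem} with the conjugate of an adjoint field $\psi$, apply Green's second identity so the volume terms cancel, and choose the adjoint boundary condition ($\partial_n\psi+\mathrm{i}\bar k\psi$ equal to the residual times $\bar k^{2}$) so that the boundary integral collapses to $\int_\Gamma \delta u\,\overline{R}\,ds$, whence the duality identity $\langle \mathrm{D}M\delta q, R\rangle_\Gamma=\langle\delta q,\bar{\tilde u}\psi\rangle_\Omega$ gives the result. The conjugations and normalization you flag as the delicate points are resolved in the paper exactly as you anticipate, via the adjoint system \eqref{bcres} with conjugated coefficients and the sign-flipped absorbing condition.
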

\begin{proof}
Let $\tilde{u}$ be the solution of \eqref{forwardproblem2}, and $\delta u$ be the
solution of \eqref{deltaproblem}.

Multiplying the first equation in \eqref{deltaproblem} with the complex
conjugate of a test function $\psi$ and integrating over $\Omega$ on both sides,
we obtain
\begin{align*}
\int_{ \Omega}\Delta \delta u
\bar{\psi}{\rm d}x+\int_{ \Omega}
k^2(1+q_{\tilde{k}})\delta u \bar{\psi}{\rm d}x=\int_{
\Omega}-k^2 \delta q \tilde{u} \bar{\psi}{\rm d}x.
\end{align*}
Green's second identity yields
\begin{align*}
\int_{ \Omega} \delta u
\Delta \bar{\psi}{\rm d}x+\int_{ \Gamma}(\bar{\psi} \frac{\partial \delta
u}{\partial n}-\delta u\frac{\partial \bar{\psi}}{\partial n}){\rm d}s
+\int_{ \Omega}
k^2(1+q_{\tilde{k}})\delta u \bar{\psi}{\rm d}x
=-k^2 \int_{ \Omega} \delta q
\tilde{u}\bar{\psi}{\rm d}x.
\end{align*}

We now define the adjoint problem
\begin{align}\label{bcres}
&\Delta \psi+\bar{k}^2(1+\bar{q}_{\tilde{k}}(k,x))\psi=0 \quad & \textrm{in } \Omega,\\
&\Delta \psi+k^2 \psi=0 & \textrm{in } \Omega_0,\nonumber\\
&\frac{\partial \psi}{\partial n}+{\rm i} \bar{k}
\psi=(u^0-\tilde{u}^0)\bar{k}^2 & \textrm{on } \Gamma.\nonumber
\end{align}
Since the existence and uniqueness of the weak solution for the
adjoint problem may be established by following the same proof of
Lemma~\ref{bdnSq}, we omit the proof here.

The above equation is then reduced to
\begin{align*}
\int_{ \Gamma}(\bar{\psi} \frac{\partial \delta
u}{\partial n}-\delta u\frac{\partial \bar{\psi}}{\partial n}){\rm d}s
=-k^2 \int_{ \Omega} \delta q
\tilde{u}\bar{\psi}{\rm d}x.
\end{align*}
It follows from boundary conditions of \eqref{deltaproblem} and the adjoint problem that
\begin{align*}
\int_{ \Gamma}\overline{(u^0-\tilde{u}^0)}k^2 \delta
u {\rm d}s =k^2 \int_{ \Omega} \delta q \tilde{u}\bar{\psi}{\rm d}x,
\end{align*}
Extending the integration domain to $\Omega$ and using the operators defined above, we obtain 
\begin{align*}
\int_{ \Omega}{\rm D}M(q_{\tilde{k}})\delta q
\overline{R(q_{\tilde{k}})} {\rm d}x =\int_{ \Omega} \delta q
\tilde{u}\bar{\psi}{\rm d}x.
\end{align*}
We know from the adjoint operator ${\rm D}M^*(q_{\tilde{k}})$ that
\begin{align*}
\int_{ \Gamma}\delta q 
\overline{{\rm D}M^*(q_{\tilde{k}})R(q_{\tilde{k}})} {\rm d}s =\int_{ \Omega}
\delta q \tilde{u}\bar{\psi}{\rm d}x.
\end{align*}
Since it holds for any $\delta q$, we have
\begin{align*}
\overline{{\rm D}M^*(q_{\tilde{k}})R(q_{\tilde{k}})}=
\tilde{u}\bar{\psi}.
\end{align*}
Taking the complex conjugate of the above equation yields the result. \qedhere
\end{proof}

Using this theorem, we can rewrite \eqref{landweber} as
\begin{align}
\delta
q=\beta\bar{\tilde{u}}\psi.\label{deltaq}
\end{align}

So for each incident wave and each wavenumber $k_0$, we have to
solve one forward problem \eqref{forwardproblem2} along with one
adjoint problem \eqref{bcres}. Since the adjoint problem has a
similar variational form as the forward problem, essentially, we
need to compute two forward problems at each sweep. Once $\delta q$
is determined, $q_{\tilde{k}}$ is updated by $q_{\tilde{k}}+\delta
q$.

\begin{figure}
\begin{center}
\subfigure[]{
\resizebox*{6cm}{!}{
\includegraphics{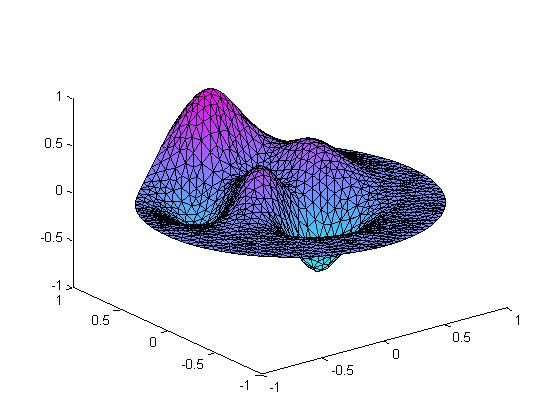}}}
\subfigure[]{
\resizebox*{6cm}{!}{
\includegraphics{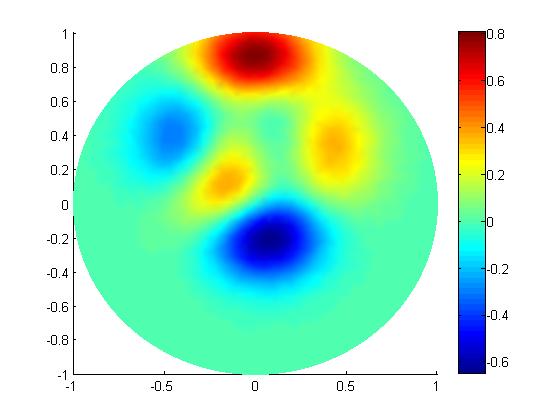}}}
\subfigure[]{
\resizebox*{6cm}{!}{
\includegraphics{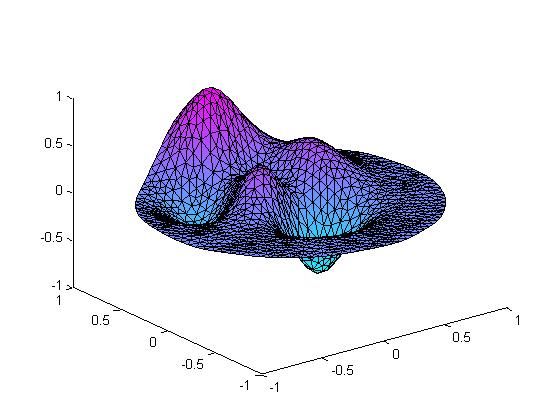}}}
\subfigure[]{
\resizebox*{6cm}{!}{
\includegraphics{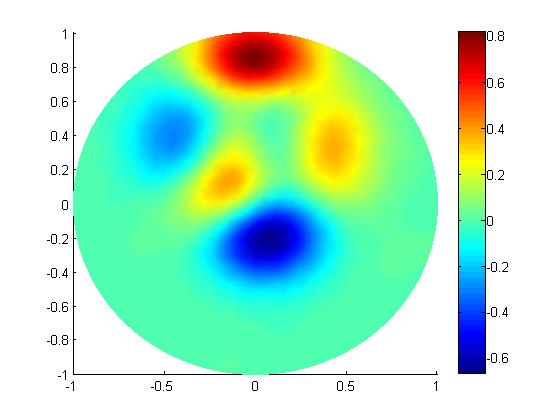}}}
\subfigure[]{
\resizebox*{6.5cm}{!}{
\includegraphics{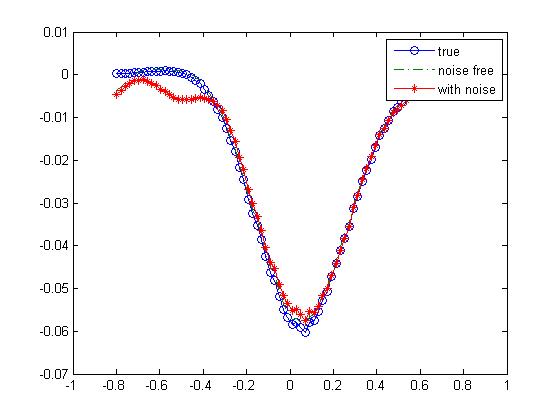}}}
\end{center}
\caption{Example 1: (a,b): surface and contour views of the true scatterer function; (c,d): final reconstruction of the scatterer function with noisy data. (e): comparison of the true scatterer, final reconstruction without noise and final reconstruction with noisy data at cross section $y=-0.6$.}\label{Example1}
\end{figure}

\section{Implementation}\label{implementation}
In this section, we discuss the numerical solution of the forward
scattering problem and the computational issues of the recursive
linearization algorithm.

The scattering data are obtained by numerical solution of the
forward scattering problem. To implement the algorithm numerically,
we employ Nystr\"{o}m's method in the exterior region
$\Omega_0$ and add some suitable boundary conditions on
$\Gamma$. Readers are referred to~\cite{Kress} for a
detailed description of Nystr\"{o}m's method. See also~\cite{C-K}
for the implementation of Nystr\"{o}m's method on integral equations
generated by Helmholtz equation. Based on Kirsch and Monk's idea
in~\cite{K-M1} and~\cite{K-M2}, the exterior problem is solved by
integral equation with radiation condition.

Define the space
\begin{align}
W(\Omega_0)=\{u^{\rm s} \in
H^1_{\mathrm{loc}}(\Omega_0)|\quad
\lim_{r\to\infty} \sqrt{r}\left(\frac{\partial
u^{\rm s}}{\partial r}-{\rm i} k_0 u^{\rm s}\right)=0,\quad r=|x|\}.
\end{align}
Define the operators
\begin{align}
&G_{\rm i}: \quad H^{\frac{1}{2}}(\Gamma)\rightarrow H^1(\Omega),\\
&G_{\rm e}: \quad H^{\frac{1}{2}}(\Gamma)\rightarrow
W(\Omega_0)
\end{align}
by the following boundary problems. Given $\lambda_{\Gamma} \in
H^{\frac{1}{2}}(\Gamma)$, define $G_{\rm i} \lambda_{\Gamma}=w$ where
$w\in H^1(\Omega)$ is the weak solution of
\begin{align}
&\Delta w+k^2(1+q_{\tilde{k}}(k,x))w=0 \quad & \textrm{in } \Omega,\nonumber\\
&\frac{\partial w}{\partial n}+{\rm i} k
w=\lambda_{\Gamma} & \textrm{on } \Gamma.
\end{align}
Similarly define $G_{\rm e} \lambda_{\Gamma}=w$ as the
weak solution of
\begin{align}
&\Delta w+k^2w=0 & \textrm{in } \Omega_0,\nonumber\\
&\frac{\partial w}{\partial n}+{\rm i} k w=\lambda_{
\Gamma} &
\textrm{on } \Gamma,\nonumber\\
&\lim_{ r\to\infty} \sqrt{r}\left(\frac{\partial
w}{\partial r}-{\rm i} k w\right)=0,\quad r=|x|.
\end{align}

To ensure continuity of solution of the forward problem across
$\Gamma$, it suffices to choose $\lambda_{\Gamma} \in
H^{\frac{1}{2}}(\Gamma)$ such that
\begin{align}
&G_{\rm i} \lambda_{\Gamma}+G_{\rm i}(\frac{\partial u^{\rm i}}{\partial n}+{\rm i} k_0
u^{\rm i})=G_{\rm e} \lambda_{ \Gamma}+u^{\rm i}
\quad & \textrm{on } \Gamma.\label{eqbc1}
\end{align}
The function $\lambda_{\Gamma}$ is approximated by trigonometric polynomials of order
$N$. Represent $\Gamma$ by $x(t)=(r_{\Gamma}\cos\theta,
r_{\Gamma}\sin\theta), 0\leq\theta\leq 2\pi$. Write
$\lambda_{\Gamma}=\sum_{n=-N}^{N-1} a_n
\exp({\rm i}n\theta_1)$. Thus
for \eqref{eqbc1}, $2N+1$ finite element problems need to be solved
on the left hand side, and
since $\Gamma$ is a circle, we can compute
$G_{\rm e}\lambda_{ \Gamma}$ explicitly as a finite linear
combination of Hankel functions:
\begin{align}
G_{\rm e}\lambda_{
\Gamma}=\frac{1}{k_0}\sum_{n=-N}^{N-1}\frac{a_n
\exp({\rm i}n\theta)}{(H_n^{(1)})'(k_0 r_{
\Gamma})+{\rm i} H_n^{(1)}(k_0 r_{ \Gamma})}H^{(1)}_n(k_0
r)
\end{align}
As for the adjoint problem, the continuity conditions are:
\begin{align}
&G_{\rm i} \lambda_{\Gamma}+G_{\rm i}((\overline{u-\tilde{u}})k_{\rm b}^2)=G_{\rm e}
\lambda_{ \Gamma}-G_{\rm e}((\overline{u-\tilde{u}})k^2)
\quad \textrm{on } \Gamma.
\end{align}

\begin{figure}
\begin{center}
\subfigure[]{
\resizebox*{6cm}{!}{
\includegraphics{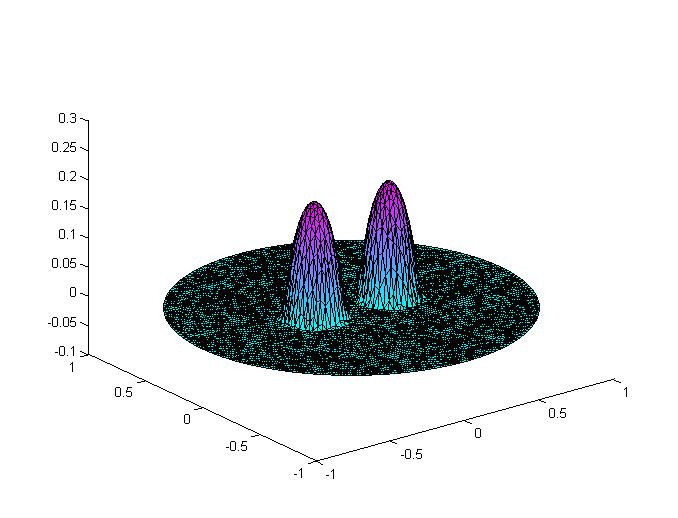}}}
\subfigure[]{
\resizebox*{6cm}{!}{
\includegraphics{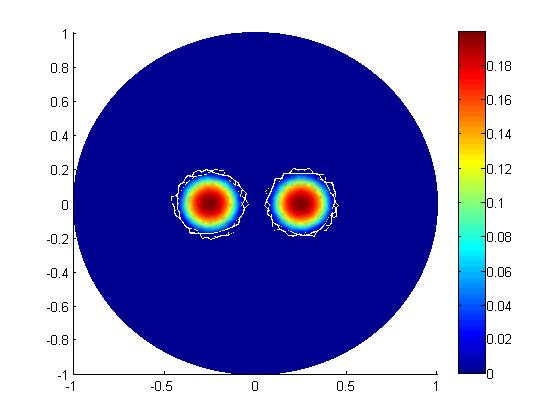}}}
\subfigure[]{
\resizebox*{6cm}{!}{
\includegraphics{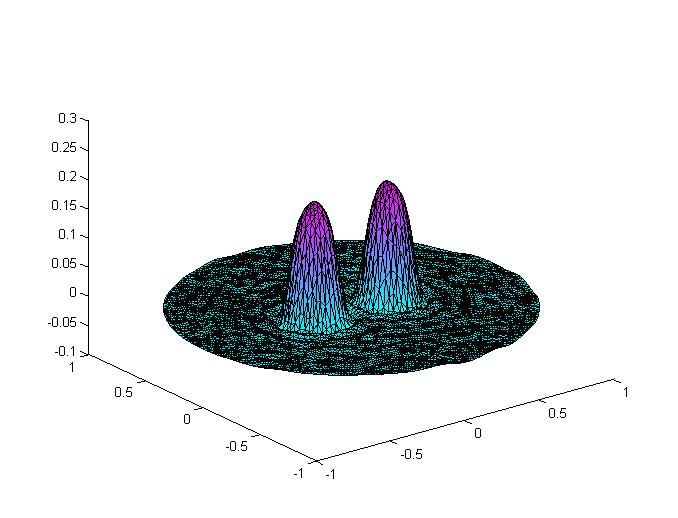}}}
\subfigure[]{
\resizebox*{6cm}{!}{
\includegraphics{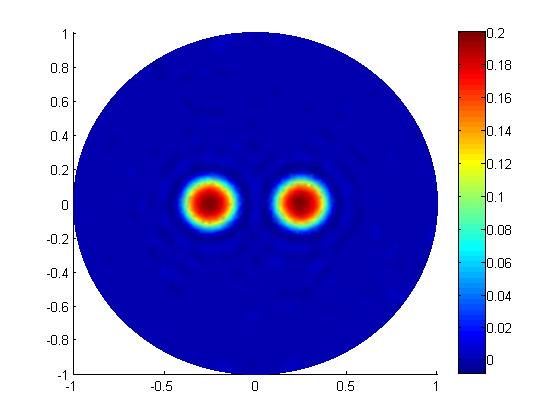}}}
\subfigure[]{
\resizebox*{6cm}{!}{
\includegraphics{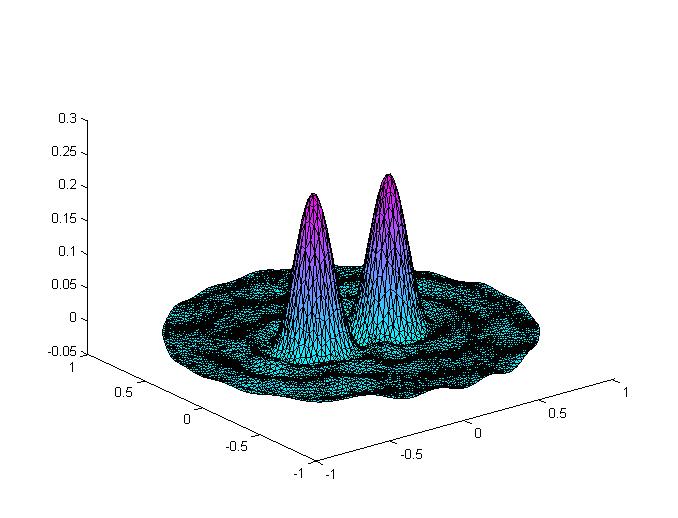}}}
\subfigure[]{
\resizebox*{6cm}{!}{
\includegraphics{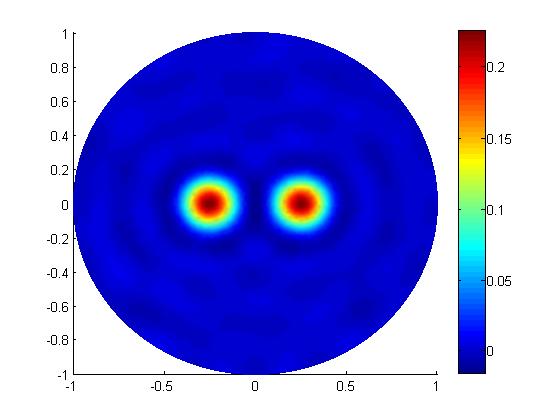}}}
\end{center}
\caption{Example 2: (a,b): surface and contour views of the true scatterer function; (c,d): Final reconstruction of the scatterer function with noise-free data; (e,f): final reconstruction of the scatterer function with noisy data. } \label{Example2}
\end{figure}

\section{Numerical Experiments}\label{example}
In the following, to illustrate the performance of the algorithm,
two numerical examples are presented for reconstructing the
scatterer of the Helmholtz equation in two dimensions. We performed four reconstructions, each example with noise-free and noisy data that contains $2\%$ multiplicative noise. The scattered field takes the form $u^{\rm s}\vert_\Gamma :=(1+0.02\textrm{ rand})u^{\rm s}\vert_\Gamma$, where rand gives uniformly distributed random numbers in $[-1,1]$.

\begin{example} Let
\begin{multline*}
\sigma(x,y)=0.3(1-x)^2 \exp(-x^2-(y+1)^2)\\
-(\frac{x}{5}-x^3-y^5)\exp(-(x^2+y^2))-\frac{1}{30}\exp(-(x+1)^2-y^2).
\end{multline*}
Define $q(k_0,x)={\rm i} \sigma(3x,3y-1)/k_0$ in $\Omega$. See figure \ref{Example1} for the surface
plot of the scatterer function in the
domain $|x|<1$, the result of the final reconstructions with noise-free and noisy data using the wavenumber $k_0=10.1$, which has relative error $1.64\%$ and $2.49 \%$, respectively.
\end{example}

\begin{example}

In this example, we performed the reconstruction with scatterer function
\begin{equation*}
q(k_0,x)=
\begin{cases}
{\rm i} \displaystyle \frac{0.2}{k_0} & (x+0.25)^2+y^2<0.2^2 \quad \text{in $\Omega$,}\\
{\rm i} \displaystyle \frac{0.2}{k_0} & (x-0.25)^2+y^2<0.2^2 \quad \text{in $\Omega$,}\\
0 & \text{elsewhere in $\Omega$.}
\end{cases}
\end{equation*}
Using the wavenumber $k_0=12.1$, the final reconstructions with noise-free and noisy data have relative
errors $5.23\%$ and $16.05 \%$. The functions and reconstructions are shown in figure \ref{Example2}.

\end{example}

\section{Conclusions}
In this paper we have presented a continuation method on an inverse scattering 
problem in dispersive media using multi-frequency data The numerical results showed the 
efficiency and robustness of the algorithm. Our future project is to vary the real part of the dielectric constant, in which case both real and imaginary part of the scatterer need to be reconstructed. Another direction is to consider the three dimensional case.

\section*{Acknowledgement}
The author would like to sincerely thank Gang Bao for his generous guidance and help on the topic of inverse scattering problem and Peijun Li for his instructive idea of tomography problem. 

\section*{Funding}
The author was supported by the Faculty Development Grant from Saint Francis University.

\newpage

\end{document}